\documentclass{amsart}
\usepackage{amsmath,amssymb}
\usepackage{bm}
\usepackage{graphicx}
\usepackage{float}
\usepackage{ascmac}
\usepackage{fancybox}
\usepackage{tabularx}
\usepackage{multicol}
\usepackage{enumerate}
\usepackage{slashed}
\usepackage{mathrsfs}

\newtheorem{dfn}{Definition}[section]
\newtheorem{thm}[dfn]{Theorem}
\newtheorem{prop}[dfn]{Proposition}

\newtheorem{cor}[dfn]{Corollary}
\newtheorem{remark}[dfn]{Remark}

\numberwithin{equation}{section}

\DeclareMathOperator*{\divergence}{div}

\title[An inverse obstacle problem for the magnetic Schr\"odinger equation]{An inverse obstacle problem for the magnetic Schr\"odinger equation}

\author{Mourad Choulli}
\address{Universit\'{e} de Lorraine, 34 cours L\'{e}opold, 54052 Nancy cedex, France}
\email{mourad.choulli@univ-lorraine.fr}

\author{Hiroshi Takase}
\address{Institute of Mathematics for Industry, Kyushu University, 744 Motooka, Nishi-ku, Fukuoka 819-0395, Japan}
\email{htakase@imi.kyushu-u.ac.jp}

\date{\today}
\keywords{inverse obstacle problem, magnetic Schr\"odinger equation, Lipschitz stability, logarithmic stability, Carleman inequality.}
\subjclass[2020]{35R30, 35Q41, 58J35}

\begin{document}
\begin{abstract}
We establish stability inequalities of an inverse obstacle problem for the magnetic Schr\"odinger equation. We mainly study the problem of reconstructing an unknown function defined on the obstacle boundary from two measurements performed on the boundary of a domain surrounding the obstacle. We show for the inverse problem a Lipschitzian stability locally in time and a logarithmic stability globally in time.
\end{abstract}
\maketitle

\section{Introduction}

Let $n\ge 2$ be an integer. Throughout this text, we use the Einstein summation convention for quantities with indices. If a term appears twice, both as a superscript and a subscript, that term is assumed to be summed from $1$ to $n$. From now on, all functions are assumed to be complex-valued unless otherwise stated.

Let $(g_{k\ell})\in W^{3,\infty}(\mathbb{R}^n;\mathbb{R}^{n\times n})$ be a symmetric matrix-valued function satisfying
\[
g_{k\ell}(x)\xi^k\xi^\ell\ge \kappa|\xi|^2\quad x,\xi \in\mathbb{R}^n,
\]
where $\kappa >0$ is a constant. Note that $(g^{k\ell})$  the matrix inverse to $g$ is uniformly positive definite as well. We recall that the Laplace-Beltrami operator associated with  the metric tensor $g=g_{k\ell}dx^k\otimes dx^\ell$ is given by
\[
\Delta_g u:=(1/\sqrt{|g|})\partial_k\left(\sqrt{|g|}g^{k\ell}\partial_\ell u\right),
\]
where $|g|:=\mbox{det} (g)$, and the magnetic Laplace-Beltrami operator with a magnetic potential $a=a_k dx^k$ with $(a_1,\ldots,a_n)\in W^{3,\infty}(\mathbb{R}^n;\mathbb{R}^n)$ is given by
\begin{align*}
Lu:&=(1/\sqrt{|g|})(\partial_k+i a_k)\left(\sqrt{|g|}g^{k\ell}(\partial_\ell +ia_\ell)u\right)
\\
&=\Delta_g u+2i g^{k\ell}a_k \partial_\ell u+g^{k\ell}\left[i(\partial_k a_\ell-\Gamma_{k\ell}^m a_m)-a_k a_\ell\right]u.
\end{align*}
Here, $\Gamma_{k\ell}^m$ denote the Christoffel symbols for $1\le k,\ell,m\le n$, which are given as follows
\[
\Gamma_{k\ell}^m:=(1/2)g^{jm}(\partial_kg_{j \ell}+\partial_\ell g_{jk}-\partial_jg_{k\ell}).
\]

For convenience, we recall the following notations 
\begin{align*}
&\langle X,Y\rangle=g_{k\ell}X^kY^\ell,\quad X=X^k\frac{\partial}{\partial x^k},\; Y=Y^k\frac{\partial}{\partial x^k},
\\
&\nabla_gw=g^{k\ell}\partial_k w\frac{\partial}{\partial x^\ell},
\\
&|\nabla_gw|_g^2=\langle\nabla_gw,\nabla_g\overline{w}\rangle=g^{k\ell}\partial_k w\partial_\ell \overline{w}.
\end{align*}
As usual, the  Hessian matrix $\nabla_g^2w$ is defined by
\[
(\nabla_g^2w)_{k\ell}:= \partial_{k}\partial_\ell w-\Gamma_{k\ell}^m\partial_mw.
\]

Let $dV_g=\sqrt{|g|}dx$, where $dx$ is the Lebesgue measure on $\mathbb{R}^n$. For $O$ an open set of $\mathbb{R}^n$, we assume hereinafter that $L^2(O)$, $H^1(O)$ and $H^2(O)$ are respectively equipped with the norms
\begin{align*}
&\|w\|_2:=\left(\int_O |w|^2dV_g\right)^{1/2},
\\
&\|w\|_{1,2}:= \left(\int_O \left[|w|^2+|\nabla_gw|_g^2\right]dV_g\right)^{1/2},
\\
&\|w\|_{2,2}:= \left(\int_O \left[|w|^2+|\nabla_gu|_g^2+|\nabla_g^2w| ^2_g\right]dV_g\right)^{1/2}.
\end{align*}
Here
\[
|\nabla_g^2w| ^2_g:=g^{k_1\ell_1}g^{k_2\ell_2}(\nabla_g^2w)_{k_1k_2}(\nabla_g^2\overline{w})_{\ell_1\ell_2}.
\]
In  this text, $\phi\in C^4(\mathbb{R}^n;\mathbb{R})$ satisfies the following properties
\begin{equation}\label{phi}
\inf_{\{\phi> 0\}}|\nabla_g\phi |_g>0 ,\quad \inf_{\{\phi> 0\}}\inf_{|\xi|=1}\nabla_g ^2\phi(\xi,\xi)>0
\end{equation}
and $\{\phi<0\}\subset\mathbb{R}^n$ is bounded. An example of such a function was given \cite[Example 1.1]{CT2024a}.

Next, let $T>0$, $B:=\{\phi<0\}$, $\Gamma:=\{ \phi=0\}$ ($=\partial B$), $U:=\mathbb{R}^n\setminus \overline{B}$ and $\Sigma :=\Gamma\times (0,T)$. We define
\[Pu:=i\partial_t u+Lu\]
and we consider the non-homogenuous IBVP
\begin{equation}\label{exIBVP}
\begin{cases}
Pu=0\quad \text{in}\; U\times(0,T),
\\
u_{|\Sigma}=f,
\\
u(\cdot,0)=u_0.
\end{cases}
\end{equation}

Let $\mathcal{H}:=W^{3,1}((0,T);L^2(U))\cap W^{2,1}((0,T);H^2(U))$ and
\[
\mathcal{H}_0:=\{H\in \mathcal{H};\; PH(\cdot,0)\in H_0^1(U)\cap H^2(U)\}.
\]
Then consider the trace space
\[
\mathcal{T}:=\{(f,u_0)=(H_{|\Sigma},H(\cdot,0));  H\in \mathcal{H}_0\},
\]

Let $\mathcal{X}:=C^2([0,T];L^2(U))\cap C^1([0,T];H^2(U))$. We prove in Appendix \ref{appA} that, for all $(f,u_0)\in \mathcal{T}$, \eqref{exIBVP} admits a unique solution $u=u(f,u_0)\in \mathcal{X}$.

Let $\Omega \Supset B$ be an arbitrarily fixed bounded $C^{0,1}$ domain such that $D:=\Omega\setminus \overline{B}$ is connected. We seek to establish stability inequalities for the problem of determining the unknown function $f$ from the measurements $u(f,u_0)_{|\Sigma_0}$ and $\partial_{\nu_g} u(f,u_0)_{|\Sigma_0}$, where $\Sigma_0 :=\partial\Omega \times(0,T)$. This problem can be seen as determining an unknown source emanating from the obstacle from two measurements made far from the obstacle.

To the authors' knowledge, this is the first paper to address such an inverse problem for the magnetic Schr\"odinger equation. The same type of inverse problem has been considered by Choulli-Takase \cite{CT2024} for elliptic and parabolic equations, and by Choulli-Takase \cite{CT2024a} for hyperbolic equations.  

We fix $H_0\in \mathcal{H}_0$ so that $\alpha:=\|H_0(\cdot,0)_{|\Gamma}\|_{L^2(\Gamma)}>0$,
where
\[
\|w\|_{L^2(\Gamma)}:=\left(\int_\Gamma|w|^2dS_g\right)^{1/2}.
\]
Here $dS_g:=\sqrt{|g|}dS$, where $dS$ denotes the surface measure on $\partial D$.

Define
\[
\mathscr{F}_0:=\left\{f\in H_{|\Sigma};\; H\in \mathcal{H}_0,\; H(\cdot,0)=H_0(\cdot,0)\right\}.
\]

By this definition, for all $f\in\mathscr{F}_0$, \eqref{exIBVP} has a unique solution $u=u(f)\in \mathcal{X}$.

The following additional notations will be used in the sequel
\begin{align*}
&\nu_g=(\nu_g)^k\frac{\partial}{\partial x^k},\quad (\nu_g)^k=\frac{g^{k\ell}\nu_\ell}{\sqrt{g^{\ell_1 \ell_2}\nu_{\ell_1}\nu_{\ell_2}}},
\\
&\partial_{\nu_g}w=\langle\nu_g,\nabla_g w\rangle,
\end{align*}
where $\nu$ denotes the outer unit normal to $\partial D$. Also, define  $\nabla_{\tau_g} w$ by
\[
\nabla_{\tau_g} w=\nabla_g w-(\partial_{\nu_g} w)\nu_g.
\]
We verify that 
\[
|\nabla_g w|_g^2=|\nabla_{\tau_g} w|_g^2+|\partial_{\nu_g} w|^2.
\]

Let $\beta>0$ be fixed and define the admissible set of the unknown function $f$ by
\begin{align*}
&\mathscr{F}:=\left\{f\in \mathscr{F}_0;\; \|f(\cdot,t)\|_{L^2(\Gamma)}\ge\alpha\quad\text{and}\right.
\\
&\hskip 4cm \left.\|\partial_t f(\cdot,t)\|_{L^2(\Gamma)}+\|\nabla_{\tau_g}f(\cdot,t)\|_{L^2(\Gamma)}\le \beta,\; t\in[0,T]\right\},
\end{align*}
where 
\[
\|\nabla_{\tau_g}w\|_{L^2(\Gamma)}:=\left(\int_\Gamma|\nabla_{\tau_g}w|_g^2dS_g\right)^{1/2}. 
\]

$L^2(\Sigma)$ and $H^1(\Sigma)$ will be endowed respectively with their naturals norms :
\begin{align*}
&\|w\|_{L^2(\Sigma)}:=\left(\int_\Sigma |w|^2dS_gdt\right)^{1/2},
\\
&\|w\|_{H^1(\Sigma)}:=\left(\int_\Sigma \left[|w|^2+|\partial_tw|^2+|\nabla_{\tau_g}w|_g^2\right]dS_gdt\right)^{1/2},
\end{align*}
and similarly for $L^2(\Sigma_0)$ and $H^1(\Sigma_0)$.

The main result of this paper is  the following theorem.
\begin{thm}\label{exLipschitz}
Let $\zeta=(g,\phi,\Omega, a,T,\beta/\alpha)$. Then there exist $C=C(\zeta)>0$ and $c=c(\zeta)>0$ such that for any $\varepsilon\in(0,T/2)$ and $f\in\mathscr{F}$ we have
\[
\|f\|_{H^1(\Gamma\times(\varepsilon,T-\varepsilon))}
\le Ce^{c/\varepsilon}\left(\|u(f)\|_{H^1(\Sigma_0)}+\|\partial_{\nu_g} u(f)\|_{L^2(\Sigma_0)}\right).
\]
\end{thm}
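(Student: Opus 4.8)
The plan is to reduce the inverse obstacle problem to an observability/stability estimate for the magnetic Schr\"odinger equation on the annular region $D = \Omega\setminus\overline B$, driven by the unknown Dirichlet datum $f$ on $\Sigma$, with observation the full Cauchy data $(u,\partial_{\nu_g}u)$ on $\Sigma_0$. Concretely, I would first observe that $w := u(f) - u(H_0|_{\Sigma})$ (or rather the solution $u(f)$ itself, exploiting that $H_0(\cdot,0)$ is fixed and $u_0$ is thereby essentially normalized across $\mathscr{F}$) satisfies $Pw=0$ in $D\times(0,T)$ with the prescribed trace on $\Sigma$ and known Cauchy data on $\Sigma_0$; the goal becomes estimating $\|f\|_{H^1(\Gamma\times(\varepsilon,T-\varepsilon))}$ from above by the Cauchy data norms on $\Sigma_0$. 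The weight function $\phi$ with the two convexity-type conditions in \eqref{phi} is tailor-made to produce a global Carleman estimate for $P=i\partial_t+L$ with a pseudoconvex weight of the form $e^{s\varphi}$, $\varphi = e^{\lambda\phi}$ (or $\varphi=\phi$ with the Hessian positivity), singular at $t=0,T$ via an additional factor like $(t(T-t))^{-1}$; this is the standard Bukhgeim–Klibanov / Imanuvilov–Yamamoto machinery adapted to the magnetic Schr\"odinger operator.

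The key steps, in order, are: (1) State and invoke (or prove) a Carleman inequality for $P$ on $D\times(0,T)$ with boundary terms on $\Sigma$ and $\Sigma_0$ — because $D$ has two boundary components, the Carleman weight must be chosen so that $\partial_{\nu_g}\varphi$ has a favorable sign on $\Gamma$ (the obstacle boundary, where we do \emph{not} measure) and the unfavorable boundary terms appear only on $\partial\Omega$ (where we \emph{do} measure); the conditions on $\phi$ ensure exactly this, since $\{\phi<0\}=B$ and $\nabla_g\phi$ points outward across $\Gamma$. (2) Apply the Carleman estimate to $w=u(f)$ and to $\partial_t w$ (to recover the $H^1$-in-time norm of $f$), absorbing the first-order magnetic terms $2ig^{k\ell}a_k\partial_\ell$ and the zeroth-order terms into the left-hand side for $s$ large, using $a\in W^{3,\infty}$. (3) Use a trace/lifting argument to pass from the interior Carleman bound near $\Gamma$ to the surface norm $\|f\|_{H^1(\Gamma\times(\varepsilon,T-\varepsilon))}$: here the tangential-gradient and time-derivative control built into the admissible set $\mathscr{F}$ (the bound $\|\partial_t f\|_{L^2(\Gamma)}+\|\nabla_{\tau_g}f\|_{L^2(\Gamma)}\le\beta$) and the normalization $\|f(\cdot,t)\|_{L^2(\Gamma)}\ge\alpha$ enter, allowing one to convert the scale-invariant Carleman output into a genuine $H^1(\Sigma)$ bound with constant depending only on $\beta/\alpha$. (4) Choose the Carleman parameter $s$ as a function of $\varepsilon$ (roughly $s\sim 1/\varepsilon$) to localize the time interval to $(\varepsilon,T-\varepsilon)$ and collect the resulting $e^{c/\varepsilon}$ factor; the cutoff in time produces commutator terms supported near $t=\varepsilon$ and $t=T-\varepsilon$ which are controlled by the same Carleman estimate at the cost of that exponential.

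The main obstacle I anticipate is step (3) together with the bookkeeping in step (1): one needs a Carleman estimate that simultaneously (a) has the correct sign of boundary terms on the \emph{inner} boundary $\Gamma$ so they can be dropped, (b) retains enough information to bound a surface norm of $f$ on $\Gamma$ rather than an interior norm, and (c) handles the magnetic first-order perturbation without losing a derivative. Point (b) is delicate because $f$ lives on $\Gamma$ where the measurement is absent — the resolution is presumably a second application of the Carleman inequality combined with energy estimates propagating information from $\Sigma_0$ across the connected annulus $D$ to $\Sigma$, i.e. a quantitative unique continuation from $\partial\Omega$ to $\partial B$; the connectedness of $D$ is used precisely here. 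A secondary technical point is justifying the differentiation $\partial_t$ of the IBVP \eqref{exIBVP} within the regularity class $\mathcal{X}$ and the space $\mathcal{H}_0$, which is exactly why $\mathcal{H}_0$ was defined with the compatibility condition $PH(\cdot,0)\in H_0^1(U)\cap H^2(U)$; this should be routine given the well-posedness proved in Appendix \ref{appA}. Finally, the passage from the local-in-time Lipschitz estimate to a global logarithmic one (mentioned in the abstract) would follow by interpolating the $e^{c/\varepsilon}$ estimate against an a priori bound and optimizing in $\varepsilon$, but for Theorem \ref{exLipschitz} itself only the displayed $e^{c/\varepsilon}$ inequality is needed.
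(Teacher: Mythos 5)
Your overall framework (global Carleman estimate with the degenerate-in-time weight, pseudoconvexity of $\phi$, absorption of the magnetic first-order terms for large $s$, the role of $\beta/\alpha$) matches the paper's, but the central mechanism is missing, and the substitute you propose for it would not deliver the stated inequality. You correctly identify the crux — bounding a surface norm of $f$ on $\Gamma$, where nothing is measured — but you leave it as an anticipated obstacle and suggest resolving it by ``quantitative unique continuation from $\partial\Omega$ to $\partial B$'' across the annulus $D$. That route would at best produce a H\"older or logarithmic dependence on the Cauchy data together with an a priori bound on $u$; it cannot yield the linear-in-data estimate with constant $Ce^{c/\varepsilon}$ claimed in the theorem. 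Likewise, your step (1) is internally inconsistent with your goal: you want the boundary terms on $\Gamma$ to have a sign allowing them to be ``dropped,'' but dropping them discards exactly the quantity $f=u_{|\Sigma}$ you need to estimate.

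The paper's resolution is that the Carleman inequality (Proposition \ref{global_Carleman_estimate}) is engineered so that the inner boundary contributes to the \emph{left}-hand side with the top power of the parameter: since $\nu_g=-\nabla_g\phi/|\nabla_g\phi|_g$ on $\Gamma$, the term $s^3|\nabla_g\varphi|_g^2\,\partial_{\nu_g}\varphi\,|z|^2$ has a favorable sign and produces $\int_\Sigma e^{2s\varphi}\sigma^3|u|^2$ on the left, while the only $\Sigma$-terms on the right are $\sigma|\nabla_{\tau_g}u|_g^2$ and $s(\gamma\xi)^{-1}|\partial_t u|^2$, which carry strictly lower powers of $\sigma$. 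These are then absorbed for $s\ge s_\ast$ using the admissibility condition $\|\partial_t f(\cdot,t)\|_{L^2(\Gamma)}+\|\nabla_{\tau_g}f(\cdot,t)\|_{L^2(\Gamma)}\le(\beta/\alpha)\|f(\cdot,t)\|_{L^2(\Gamma)}$, leaving only the $\Sigma_0$ observation terms. No trace/lifting argument, no propagation estimate, and no second application of the Carleman inequality to $\partial_t u$ are needed: the $H^1(\Gamma\times(\varepsilon,T-\varepsilon))$ norm follows from the $L^2$ bound by the same admissibility inequality, and the factor $e^{c/\varepsilon}$ comes simply from bounding $e^{2s\varphi}\ge e^{-cs/\varepsilon}$ on $\Gamma\times(\varepsilon,T-\varepsilon)$ with $s$ fixed — no time cutoff or choice $s\sim1/\varepsilon$ is involved. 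Without the sign-and-power structure of the $\Sigma$-boundary terms in the Carleman estimate, your plan does not close.
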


Note that Theorem \ref{exLipschitz} quantifies the unique determination of $f$ from the Cauchy data $(u(f)_{|\Sigma_0},\partial_{\nu_g}u(f)_{|\Sigma_0})$. To our knowledge, there are no other results in the literature comparable to those established in the present work.

In the case where $f$ is written as $f(x,t)=\mathfrak{a}(x)\mathfrak{b}(t)$, where $\mathfrak{b}$ is known, we have the following consequence of Theorem \ref{exLipschitz}.

\begin{cor}
Let $\zeta=(g,\phi,\Omega,a,T,\beta/\alpha)$. Then for all $f=\mathfrak{a}\otimes \mathfrak{b}\in\mathscr{F}$ we have
\[
\|\mathfrak{a}\|_{H^1(\Gamma)}
\le C\left(\|u(f)\|_{H^1(\Sigma_0)}+\|\partial_{\nu_g} u(f)\|_{L^2(\Sigma_0)}\right),
\]
where $C=C(\zeta,\mathfrak{b})>0$ is a constant. 
\end{cor}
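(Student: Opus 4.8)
The plan is to obtain the corollary as a direct specialization of Theorem~\ref{exLipschitz}. The idea is to freeze the parameter $\varepsilon$ at a value depending only on $T$, which replaces the factor $Ce^{c/\varepsilon}$ by a constant depending on $\zeta$ alone, and then to exploit the tensor structure $f=\mathfrak{a}\otimes\mathfrak{b}$ to split off the known time factor $\mathfrak{b}$ and recover $\mathfrak{a}$.

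Concretely, I would apply Theorem~\ref{exLipschitz} with $\varepsilon=T/4$ (any fixed value in $(0,T/2)$ would do) to get
\[
\|f\|_{H^1(\Gamma\times(T/4,3T/4))}\le C(\zeta)e^{4c(\zeta)/T}\bigl(\|u(f)\|_{H^1(\Sigma_0)}+\|\partial_{\nu_g}u(f)\|_{L^2(\Sigma_0)}\bigr).
\]
Next I would unfold the left-hand norm: since $\partial_t$ acts only on $\mathfrak{b}$ while $\nabla_{\tau_g}$ acts only on $\mathfrak{a}$, the variables separate and Fubini's theorem gives
\[
\|f\|_{H^1(\Gamma\times(T/4,3T/4))}^2=\|\mathfrak{b}\|_{H^1(T/4,3T/4)}^2\|\mathfrak{a}\|_{L^2(\Gamma)}^2+\|\mathfrak{b}\|_{L^2(T/4,3T/4)}^2\|\nabla_{\tau_g}\mathfrak{a}\|_{L^2(\Gamma)}^2\ge\|\mathfrak{b}\|_{L^2(T/4,3T/4)}^2\,\|\mathfrak{a}\|_{H^1(\Gamma)}^2.
\]
Dividing by $\|\mathfrak{b}\|_{L^2(T/4,3T/4)}$ and combining with the previous inequality yields the claim with $C(\zeta,\mathfrak{b}):=\|\mathfrak{b}\|_{L^2(T/4,3T/4)}^{-1}C(\zeta)e^{4c(\zeta)/T}$.

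The only step requiring an argument, and the closest this proof comes to an obstacle, is the non-degeneracy $\|\mathfrak{b}\|_{L^2(T/4,3T/4)}>0$ that legitimizes the division. This follows from membership in $\mathscr{F}$: the constraint $\|f(\cdot,t)\|_{L^2(\Gamma)}\ge\alpha$ reads $|\mathfrak{b}(t)|\,\|\mathfrak{a}\|_{L^2(\Gamma)}\ge\alpha$ for every $t\in[0,T]$, which forces $\mathfrak{a}\not\equiv0$ and then $|\mathfrak{b}(t)|\ge\alpha/\|\mathfrak{a}\|_{L^2(\Gamma)}>0$ throughout $[0,T]$; in particular $\mathfrak{b}$ does not vanish almost everywhere on $(T/4,3T/4)$. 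Since $\mathfrak{b}$ is assumed known, hence fixed, the dependence of $C$ on $\mathfrak{b}$ through $\|\mathfrak{b}\|_{L^2(T/4,3T/4)}$ is exactly what the statement permits, so no uniformity issue arises. Apart from this bookkeeping the corollary is a routine consequence of Theorem~\ref{exLipschitz}.
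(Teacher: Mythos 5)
Your proof is correct and follows essentially the same route as the paper: apply Theorem~\ref{exLipschitz} with the fixed value $\varepsilon=T/4$, use the tensor structure to factor the $H^1(\Gamma\times(T/4,3T/4))$ norm, and use the constraint $\|f(\cdot,t)\|_{L^2(\Gamma)}=|\mathfrak{b}(t)|\,\|\mathfrak{a}\|_{L^2(\Gamma)}\ge\alpha$ to guarantee non-degeneracy of $\mathfrak{b}$. The paper phrases the lower bound via $\eta:=\min_{[0,T]}|\mathfrak{b}|>0$ rather than $\|\mathfrak{b}\|_{L^2(T/4,3T/4)}$, but this is only a cosmetic difference.
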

\begin{proof}
Since $|\mathfrak{b}|\|\mathfrak{a}\|_{L^2(\Gamma)}\ge \alpha$, we have $\eta:=\min_{[0,T]}|\mathfrak{b}|>0$. By applying Theorem \ref{exLipschitz} with $\varepsilon =T/4$, we get
\[
\eta\sqrt{T/2}\left(\|\mathfrak{a}\|_{L^2(\Gamma)}+\|\nabla_{\tau_g}\mathfrak{a}\|_{L^2(\Gamma)}\right)
\le Ce^{4c/T}\left(\|u(f)\|_{H^1(\Sigma_0)}+\|\partial_{\nu_g} u(f)\|_{L^2(\Sigma_0)}\right),
\]
where $C$ and $c$ are as in the statement of Theorem \ref{exLipschitz}.
\end{proof}

Theorem \ref{exLipschitz} gives a stability inequality locally in time since the time intervals close to $t=0$ and $t=T$ are truncated. Globally in time, we obtain the following interpolation inequality.

\begin{cor}
Let $\zeta=(g,\phi,\Omega,a,T,\beta/\alpha)$ and $r\in(0,1/2)$. Then there exist $C=C(\zeta,r)>0$ and $c=c(\zeta,r)>0$ such that for any $\varepsilon\in(0,1)$ and $f\in\mathscr{F}$ we have
\begin{equation}\label{interpolation}
\|f\|_{L^2(\Gamma\times(0,T))}\le C\left(e^{c/\varepsilon}\mathcal{D}(f)+\varepsilon^{r}\|f\|_{H^1((0,T);L^2(\Gamma))}\right),
\end{equation}
where
\[
\mathcal{D}(f):=\|u(f)\|_{H^1(\Sigma_0)}+\|\partial_{\nu_g} u(f)\|_{L^2(\Sigma_0)}.
\]
\end{cor}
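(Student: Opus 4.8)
The plan is to deduce the interpolation inequality \eqref{interpolation} from Theorem \ref{exLipschitz} by a standard splitting argument that separates the contribution near the endpoints $t=0$ and $t=T$ from the contribution on the central time interval. Write
\[
\|f\|_{L^2(\Gamma\times(0,T))}^2=\|f\|_{L^2(\Gamma\times(0,\delta))}^2+\|f\|_{L^2(\Gamma\times(\delta,T-\delta))}^2+\|f\|_{L^2(\Gamma\times(T-\delta,T))}^2
\]
for a parameter $\delta\in(0,T/2)$ to be optimized at the end in terms of $\varepsilon$. On the central interval I would invoke Theorem \ref{exLipschitz} with that $\delta$ in place of $\varepsilon$, obtaining $\|f\|_{L^2(\Gamma\times(\delta,T-\delta))}\le\|f\|_{H^1(\Gamma\times(\delta,T-\delta))}\le Ce^{c/\delta}\mathcal{D}(f)$.

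For the two endpoint pieces, the idea is that a short time interval of length $\delta$ can only contribute something controlled by $\delta$ to the power $r$ times the $H^1$-in-time norm of $f$. Concretely, for a function $h\in H^1((0,\delta);X)$ with values in a Hilbert space $X$, interpolation (or a direct elementary estimate using $h(t)=h(s)+\int_s^t\partial_t h$) gives $\|h\|_{L^2((0,\delta);X)}\lesssim \delta^{1/2}\|h\|_{L^\infty((0,\delta);X)}\lesssim \delta^{1/2}\bigl(\delta^{-1/2}\|h\|_{L^2((0,\delta);X)}+\delta^{1/2}\|\partial_t h\|_{L^2((0,\delta);X)}\bigr)$, which after absorbing is too crude; instead one uses the sharper bound $\|h\|_{L^\infty((0,\delta);X)}^2\lesssim \delta^{-1}\|h\|_{L^2((0,\delta);X)}^2+\|h\|_{L^2((0,\delta);X)}\|\partial_t h\|_{L^2((0,\delta);X)}$ together with Young's inequality, yielding $\|h\|_{L^2((0,\delta);X)}\lesssim \delta^{r}\|h\|_{H^1((0,\delta);X)}$ for any $r\in(0,1/2)$ once $\delta$ is small. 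Applying this with $X=L^2(\Gamma)$ and $h=f$ on $(0,\delta)$ and on $(T-\delta,T)$ bounds each endpoint term by $C\delta^{r}\|f\|_{H^1((0,T);L^2(\Gamma))}$.

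Combining the three estimates gives
\[
\|f\|_{L^2(\Gamma\times(0,T))}\le C\Bigl(e^{c/\delta}\mathcal{D}(f)+\delta^{r}\|f\|_{H^1((0,T);L^2(\Gamma))}\Bigr)
\]
for all $\delta\in(0,T/2)$. To reach the stated form with $\varepsilon\in(0,1)$, I would simply set $\delta=\min\{\varepsilon,T/4\}$: for $\varepsilon\le T/4$ this is exactly the asserted inequality (with $c$ possibly enlarged), and for $\varepsilon\in(T/4,1)$ both $e^{c/\varepsilon}$ and $\varepsilon^{r}$ are bounded below and above by constants depending only on $\zeta$ and $r$, so the inequality for $\delta=T/4$ implies the one for $\varepsilon$ after adjusting $C$. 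The constants $C,c$ then depend only on $\zeta$ and $r$, as claimed.

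I do not expect a genuine obstacle here; the only point requiring a little care is the endpoint estimate, where one must use the multiplicative (interpolation-type) inequality for $\|h\|_{L^\infty}$ rather than the naive one in order to produce the gain $\delta^{r}$ with $r$ up to $1/2$, and one must check that the $C^1([0,T];H^2(U))\cap C^2([0,T];L^2(U))$ regularity of $u(f)$ guarantees $f=u(f)_{|\Sigma}\in H^1((0,T);L^2(\Gamma))$ so that the right-hand side is finite and the manipulations are legitimate. Everything else is bookkeeping of constants and the choice of $\delta$ as a function of $\varepsilon$.
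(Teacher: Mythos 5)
Your overall strategy coincides with the paper's: split $(0,T)$ into the two tails $(0,\delta)$, $(T-\delta,T)$ and the core $(\delta,T-\delta)$, apply Theorem \ref{exLipschitz} on the core, bound each tail by $C\delta^{r}\|f\|_{H^1((0,T);L^2(\Gamma))}$, and finally convert $\delta$ into $\varepsilon\in(0,1)$. The paper does exactly this, except that for the tail estimate it simply invokes a Hardy-type inequality from an earlier reference rather than proving it. The gap is in your self-contained proof of that tail estimate. The intermediate inequality you assert, $\|h\|_{L^2((0,\delta);X)}\lesssim\delta^{r}\|h\|_{H^1((0,\delta);X)}$, is false as stated: for $h\equiv 1$ both sides are comparable to $\delta^{1/2}$, so the inequality would force $1\lesssim\delta^{r}$, which fails as $\delta\to0$. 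Correspondingly, your derivation is circular: combining $\|h\|_{L^2((0,\delta);X)}^2\le\delta\,\|h\|_{L^\infty((0,\delta);X)}^2$ with the multiplicative bound $\|h\|_{L^\infty((0,\delta);X)}^2\lesssim\delta^{-1}\|h\|_{L^2((0,\delta);X)}^2+\|h\|_{L^2((0,\delta);X)}\|\partial_t h\|_{L^2((0,\delta);X)}$ returns $\|h\|_{L^2((0,\delta);X)}^2\lesssim\|h\|_{L^2((0,\delta);X)}^2+\cdots$, and the first term cannot be absorbed because the implied constant is at least $1$. No application of Young's inequality repairs a gain that is not there; an estimate localized to $(0,\delta)$ on both sides cannot produce a factor $\delta^{r}$.

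The fix is to take the supremum over the \emph{whole} interval $[0,T]$, whose length is fixed. Since $f\in\mathscr{F}$ satisfies $\|\partial_t f(\cdot,t)\|_{L^2(\Gamma)}\le\beta$, we have $f\in H^1((0,T);L^2(\Gamma))\hookrightarrow C([0,T];L^2(\Gamma))$ with embedding constant depending only on $T$, hence
\[
\|f\|_{L^2((0,\delta);L^2(\Gamma))}\le\delta^{1/2}\sup_{t\in[0,T]}\|f(\cdot,t)\|_{L^2(\Gamma)}\le C(T)\,\delta^{1/2}\|f\|_{H^1((0,T);L^2(\Gamma))},
\]
and $\delta^{1/2}\le\delta^{r}$ for $\delta\le1$ and $r\le1/2$; the same bound holds on $(T-\delta,T)$. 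This is precisely the content of the Hardy-type inequality the paper cites (stated there for $r\in(0,1/2)$). With this correction, the remainder of your argument — the application of Theorem \ref{exLipschitz} on $(\delta,T-\delta)$ and the final substitution $\delta=\min\{\varepsilon,T/4\}$, where the paper instead replaces $\varepsilon$ by $2\varepsilon/T$ — goes through and yields \eqref{interpolation}.
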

\begin{proof}
By means of the Hardy-type inequality in \cite[Corollary 3.1]{Choulli2021} (see also \cite[Corollary 1.16]{Choulli}), for any $r\in(0,1/2)$ there exists a constant $C>0$, we have for any $\varepsilon\in(0,T/2)$,
\[
\|f\|_{L^2((0,\varepsilon);L^2(\Gamma))}+\|f\|_{L^2((T-\varepsilon,T);L^2(\Gamma))}\le C\varepsilon^{r}\|f\|_{H^1((0,T);L^2(\Gamma))}.
\]
This inequality combined with that in Theorem \ref{exLipschitz} gives \eqref{interpolation} with $\varepsilon\in(0,T/2)$. We complete the proof by replacing $\varepsilon$ by $2\varepsilon/T$.
\end{proof}

\begin{remark}
{\rm
By minimizing the right-hand side of \eqref{interpolation} with respect to $\varepsilon\in(0,1)$, we obtain a stability inequality with logarithmic modulus of continuity.
}
\end{remark}

\section{Proof of Theorem \ref{exLipschitz}}

The main ingredient of the proof of theorem \ref{exLipschitz} is a Carleman inequality. Before stating this inequality precisely, we recall that $D=\Omega \setminus \overline{B}$, $\Gamma =\partial B$, $\Sigma=\Gamma\times (0,T)$, $\Sigma_0=\partial\Omega\times(0,T)$ and
\begin{align*}
Pu&=i\partial_t u+Lu
\\
&=i\partial_t u+\Delta_g u+2i g^{k\ell}a_k \partial_\ell u+g^{k\ell}\left[i(\partial_k a_\ell-\Gamma_{k\ell}^m a_m)-a_k a_\ell\right]u,
\end{align*}
where $a=a_k dx^k$ with $(a_1,\ldots,a_n)\in W^{3,\infty}(\mathbb{R}^n;\mathbb{R}^n)$. Let $Q:=D\times (0,T)$. Set $\ell(t):=[t(T-t)]^{-1}$ and
\begin{align*}
&\varphi(x,t):=(e^{\gamma(\phi(x)+2m)}-e^{4\gamma m})\ell(t),
\\
& \xi(x,t):=e^{\gamma(\phi(x)+2m)}\ell(t),
\end{align*}
where $\phi\in C^4(\mathbb{R}^n)$ satisfies \eqref{phi}.

\begin{prop}\label{global_Carleman_estimate}
Let $\zeta_0:=(g,\phi,a,T)$ and $\sigma:=s\gamma \xi$. There exist $\gamma_\ast=\gamma_\ast(\zeta_0)>0$,  $s_\ast=s_\ast (\zeta_0)>0$ and $C=C(\zeta_0)>0$ such that for any $\gamma\ge\gamma_\ast$, $s\ge s_\ast$ and $u\in L^2((0,T);H^2(D))\cap H^1((0,T);H^1(D))$ we have
\begin{align*}
&C\left(\int_{Q} e^{2s\varphi}\sigma(|\nabla_g u|_g^2+\gamma\sigma^2|u|^2)dV_gdt+\int_{\Sigma}e^{2s\varphi}\sigma(|\partial_{\nu_g}u|^2+\sigma^2|u|^2)dS_gdt\right)
\\
&\hskip 1cm\le \int_{Q} e^{2s\varphi}|Pu|^2dV_gdt+\int_{\Sigma}e^{2s\varphi}\left(s(\gamma\xi)^{-1}|\partial_t u|^2+\sigma|\nabla_{\tau_g} u|_g^2\right)dS_gdt
\\
&\hskip 4cm +\int_{\Sigma_0} e^{2s\varphi}\left(\sigma^{-1}|\partial_t u|^2+\sigma|\nabla_g u|_g^2+\sigma^3|u|^2\right)dS_gdt.
\end{align*}
\end{prop}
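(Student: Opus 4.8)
The plan is to derive the Carleman inequality of Proposition \ref{global_Carleman_estimate} from a known Carleman estimate for the magnetic Schr\"odinger operator $P=i\partial_t+L$ on the fixed domain $D$ with the weight $\varphi$ built from $\phi$. The starting point is the standard substitution $v:=e^{s\varphi}u$, which conjugates $P$ into $P_\varphi v:=e^{s\varphi}P(e^{-s\varphi}v)$. Expanding $P_\varphi$ produces a first-order-in-time term, the elliptic principal part $\Delta_g$, lower order magnetic terms, and the weight-generated terms involving $s\nabla_g\varphi$, $s^2|\nabla_g\varphi|_g^2$, $s\Delta_g\varphi$ and $is\partial_t\varphi$. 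I would split $P_\varphi v=P_1v+P_2v+R v$ into (formally) skew-adjoint and self-adjoint parts plus a remainder $Rv$ collecting the magnetic potential contributions and lower-order pieces, then compute $\|P_\varphi v\|_{L^2(Q)}^2=\|P_1v\|^2+\|P_2v\|^2+2\mathrm{Re}\langle P_1v,P_2v\rangle+\cdots$ and integrate the cross term by parts. This is the classical Bukhgeim–Klibanov / Fursikov–Imanuvilov machinery adapted to the Schr\"odinger equation (as in Baudouin–Puel and Mercado–Osses–Rosier); the convexity condition $\nabla_g^2\phi(\xi,\xi)>0$ and non-degeneracy $|\nabla_g\phi|_g>0$ on $\{\phi>0\}$ (here everywhere on $\overline{D}$ after a harmless translation, since $\overline D\subset\{\phi\ge 0\}$ outside $\overline B$ — note $D=\Omega\setminus\overline B$ lies in $\{\phi\geq0\}$) are exactly what makes the dominant bulk term $\int_Q e^{2s\varphi}\sigma(|\nabla_gu|_g^2+\gamma\sigma^2|u|^2)$ appear with a positive sign once $\gamma$ then $s$ are taken large enough to absorb the gradient-of-gradient and remainder terms, using that $|\partial_t\varphi|,|\partial_t\xi|\lesssim T^{-2}\ell(t)^{?}$ are controlled by powers of $\xi$ and that $\ell(t)\to\infty$ as $t\to 0,T$ kills boundary-in-time contributions.

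Next I would track carefully the boundary terms generated by the integrations by parts in $x$ over $D$, whose boundary $\partial D=\Gamma\sqcup\partial\Omega$ splits into the obstacle part $\Gamma$ and the outer part $\partial\Omega$. On $\Gamma$ we have $\phi=0$, so $\varphi$ and $\xi$ take their minimal values there, and crucially $\nabla_g\varphi$ is (a positive multiple of) $-\nu_g$ pointing out of $D$ into $B$; this sign of $\partial_{\nu_g}\varphi$ on $\Gamma$ is what yields, after the Rellich-type identity, the \emph{good} positive boundary term $\int_\Sigma e^{2s\varphi}\sigma(|\partial_{\nu_g}u|^2+\sigma^2|u|^2)$ on the left, at the price of the tangential term $\int_\Sigma e^{2s\varphi}\sigma|\nabla_{\tau_g}u|_g^2$ and the time term $\int_\Sigma e^{2s\varphi}s(\gamma\xi)^{-1}|\partial_tu|^2$ which must be put on the right because their signs are not favorable. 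On $\partial\Omega$ the sign of $\partial_{\nu_g}\varphi$ is not controlled, so \emph{all} of $\int_{\Sigma_0}e^{2s\varphi}(\sigma^{-1}|\partial_tu|^2+\sigma|\nabla_gu|_g^2+\sigma^3|u|^2)$ goes to the right-hand side; the powers of $\sigma$ there are dictated by the scaling of each term in the pointwise Carleman identity. I would be careful that the $|\partial_tu|^2$ boundary terms get the correct negative powers of $\sigma$ (i.e. $s(\gamma\xi)^{-1}$ on $\Sigma$ and $\sigma^{-1}$ on $\Sigma_0$), which is characteristic of Schr\"odinger Carleman estimates and comes from the $i\partial_tv$ term interacting with $is\partial_t\varphi\, v$.

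The density/regularity reduction is the remaining ingredient: the pointwise computations are valid for smooth $u$, and one extends to $u\in L^2((0,T);H^2(D))\cap H^1((0,T);H^1(D))$ by approximation, checking that every term in the inequality is continuous for this topology (the boundary terms make sense by the trace theorem, and the weights $e^{2s\varphi}\sigma^k$ are bounded on compact time subintervals and vanish rapidly as $t\to 0,T$, so no boundary-in-time terms survive and the integrals converge). I expect the main obstacle to be the bookkeeping of the cross term $2\mathrm{Re}\langle P_1v,P_2v\rangle$: isolating the dominant positive contributions $\propto s^3\gamma^4\xi^3|v|^2$ and $\propto s\gamma\xi|\nabla_gv|_g^2$ in the interior, correctly accounting for the mixed space-time contributions peculiar to the Schr\"odinger case, and showing that after fixing $\gamma=\gamma_\ast(\zeta_0)$ large the $s$-power counting lets the remainder $Rv$ (containing $\|a\|_{W^{3,\infty}}$, first derivatives of $u$ with subcritical weights, etc.) be absorbed for $s\ge s_\ast(\zeta_0)$. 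The magnetic terms $2ig^{k\ell}a_k\partial_\ell u$ and the zeroth-order potential are lower order and absorbed this way, so they do not affect $\gamma_\ast,s_\ast$ qualitatively but do enter the constants through $\zeta_0=(g,\phi,a,T)$; writing out that absorption cleanly is the delicate part of the argument.
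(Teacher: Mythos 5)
Your proposal follows essentially the same route as the paper: conjugate with $z=e^{s\varphi}u$, split the conjugated operator into the two parts $P_s^{\pm}z$ (with $iz'+\Delta_gz+s^2|\nabla_g\varphi|_g^2z$ versus the transport/zeroth-order/$is\varphi'z$ part), integrate the cross term by parts, use the convexity and non-degeneracy of $\phi$ for the bulk positivity $\sigma|\nabla_gz|_g^2+\gamma\sigma^3|z|^2$, exploit the sign $\partial_{\nu_g}\varphi=-\gamma\xi|\nabla_g\phi|_g<0$ on $\Gamma$ to place $\sigma(|\partial_{\nu_g}z|^2+\sigma^2|z|^2)$ on the left while relegating the tangential and $s(\gamma\xi)^{-1}|z'|^2$ terms to the right, dump all of $\Sigma_0$ to the right, and absorb the magnetic lower-order terms by the large parameters. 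All the signs, weight powers, and absorption steps you predict match the paper's computation.
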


The proof of Proposition \ref{global_Carleman_estimate} is quite technical but for sake of completeness we give its detailed proof in Section \ref{proof_Carleman_estimate}.

It is worth noting that only the integral term for the time derivative and the tangential derivative of $u$ on $\Sigma$ appears in the right-hand side of the above inequality. To our knowledge, this is the first Carleman estimate of this type for the dynamical Schr\"odinger operator. See also Baudouin-Puel \cite{Baudouin2002}, Mercado-Osses-Rosier \cite{Mercado2008}, Huang-Kian-Soccorsi-Yamamoto \cite{Huang2019} and Mercado-Morales \cite{Mercado2023} for Carleman estimates with degenerate weight functions such as $\varphi$.

\begin{proof}[Proof of Theorem \ref{exLipschitz}]
Let $f\in\mathscr{F}$ and $u=u(f)$. Then we have
\begin{align}\label{Sc}
&\|\partial_t u(\cdot,t)\|_{L^2(\Gamma)}+\|\nabla_{\tau_g} u(\cdot,t)\|_{L^2(\Gamma)}
\\
&\hskip 1cm =\|\partial_t f(\cdot,t)\|_{L^2(\Gamma)}+\|\nabla_{\tau_g} f(\cdot,t)\|_{L^2(\Gamma)}\le (\beta/\alpha)\|f(\cdot,t)\|_{L^2(\Gamma)}.\notag
\end{align}

Henceforth, $C=C(\zeta)>0$ and $c=c(\zeta)>0$ denote generic constants. Fix $\gamma\ge\gamma_\ast$, where $\gamma_\ast$ is given by Proposition \ref{global_Carleman_estimate}, and set $\omega:=s\xi$. By \eqref{Sc}, $\varphi_{|\Sigma}=(e^{2\gamma m}-e^{4\gamma m})\ell$ and $\xi_{|\Sigma}=e^{2\gamma m}\ell$. By applying Proposition \ref{global_Carleman_estimate} to $u$, we obtain
\begin{align*}
&C\int_\Sigma e^{2s\varphi}\omega^3|f|^2dS_gdt
\\
&\hskip 1cm \le \int_{\Sigma_0} e^{2s\varphi}\omega^3(|\partial_t u|^2+|\nabla_g u|_g^2+|u|^2)dS_gdt
\\
&\hskip 3cm +(\beta/\alpha)^2\int_\Sigma e^{2s\varphi}\omega|f|^2dS_gdt,\quad s\ge s_\ast,
\end{align*}
where $s_\ast$ is as in Proposition \ref{global_Carleman_estimate}. Upon modifying $s_\ast$, we  may and do assume that $C\omega^3-(\beta/\alpha)^2\omega>(C/2)\omega^3$. In this case, we have
\[
C\int_\Sigma e^{2s\varphi}\omega^3|f|^2dS_gdt\le \int_{\Sigma_0} e^{2s\varphi}\omega^3(|\partial_t u|^2+|\nabla_g u|_g^2+|u|^2)dS_gdt,\quad s\ge s_\ast.
\]
Let $\varepsilon\in (0,T/2)$ be arbitrarily fixed. Since
\[
\ell(t)^{-1}=t(T-t)\ge T\varepsilon/2,\quad  t\in(\varepsilon,T-\varepsilon),
\]
we obtain
\[
\varphi(t,x)\ge 2(e^{\gamma(\phi(x)+2m)}-e^{4\gamma m})/(T\varepsilon)\ge -c/\varepsilon,
\]
which implies that
\[
e^{2s\varphi}\ge e^{-cs/\varepsilon}\quad \mbox{in}\; \Gamma\times(\varepsilon,T-\varepsilon).
\]
Therefore, it follows that
\[
\int_\Sigma e^{2s\varphi}\omega^3|f|^2dS_gdt  \ge Cs^3e^{-cs/\varepsilon}\|f\|_{L^2(\Gamma\times(\varepsilon,T-\varepsilon))}^2.
\]
Moreover, since
\[
e^{2s\varphi}\omega ^3\le e^{9\gamma m}s^3\ell^3e^{-2cs\ell}\le Ce^{-cs\ell},
\]
we find
\[\int_{\Sigma_0}e^{2s\varphi}\omega^3\left(|\partial_t u|^2+|\nabla_g u|_g^2+|u|^2\right)dS_gdt
\le C\left(\|u\|_{H^1(\Sigma_0)}^2+\|\partial_{\nu_g} u\|_{L^2(\Sigma_0)}^2\right).
\]
Combining these estimates yields
\[
\|f\|_{L^2(\Gamma\times(\varepsilon,T-\varepsilon))}^2\le Ce^{cs/\varepsilon}\left(\|u\|_{H^1(\Sigma_0)}+\|\partial_{\nu_g} u\|_{L^2(\Sigma_0)}\right)^2.
\]
We complete the proof using \eqref{Sc} again.
\end{proof}

\section{Proof of Proposition \ref{global_Carleman_estimate}}\label{proof_Carleman_estimate}

\begin{proof}
Due to the large parameters $\gamma$ and $s$, it  is enough to establish the expected inequality when $a=0$. For simplicity, $\partial_tw$ will denoted by $w'$. 

Let $u\in L^2((0,T);H^2(D))\cap H^1((0,T);H^1(D))$, $z:=e^{s\varphi}u$ and 
\[
P_sz:=e^{s\varphi}(i\partial_t+\Delta_g)(e^{-s\varphi}z).
\]
A direct calculation yields $P_sz=P_s^+z+P_s^- z$, where
\[
\begin{cases}
&P_s^+z:=iz'+\Delta_g z+s^2|\nabla_g\varphi|_g^2z,
\\
&P_s^-z:=-2s\langle\nabla_g\varphi,\nabla_g z\rangle-s\Delta_g\varphi z-is\varphi' z.
\end{cases}
\]

In what follows, all  integrals on $Q=D\times(0,T)$ are with respect to the measure $dV_gdt$ and all those on $\Upsilon:=\partial D\times(0,T)$ are with respect to the surface measure $dS_gdt$. Set 
\[
\Re (P_s^+z,\overline{P_s^-z})_g=:\sum_{k=1}^9 I_k,
\]
where
\begin{align*}
&I_1:=\Re\left(-2i\int_{Q}sz'\langle\nabla_g\varphi,\nabla_g \bar{z}\rangle\right),
\\
&I_2:=\Re\left(-i\int_{Q} sz'\Delta_g\varphi\bar{z}\right),
\\
&I_3:=\Re\left(-\int_Q sz'\varphi' \bar{z}\right)
\\
&I_4:=\Re\left(-2\int_{Q} s\Delta_g z\langle\nabla_g\varphi,\nabla_g \bar{z}\rangle\right),
\\
&I_5:=\Re\left(-\int_{Q} s\Delta_g z\Delta_g\varphi \bar{z}\right),
\\
&I_6:=\Re\left(i\int_Q s\Delta_g z\varphi'\bar{z}\right),
\\
&I_7:=\Re\left(-2\int_{Q} s^3|\nabla_g\varphi|_g^2 z\langle\nabla_g\varphi,\nabla_g \bar{z}\rangle\right),
\\
&I_8:=\Re\left(-\int_{Q} s^3|\nabla_g\varphi|_g^2\Delta_g\varphi |z|^2\right)=-\int_{Q} s^3|\nabla_g\varphi|_g^2\Delta_g\varphi |z|^2,
\end{align*}
and
\[
I_9:=\Re\left(i\int_Q s^3|\nabla_g\varphi|_g^2\varphi' |z|^2\right)=0.
\]

Integrations by parts yield
\begin{align*}
I_1&=\Re\left(2i\int_{Q}sz\langle\nabla_g\varphi,\nabla_g \bar{z}'\rangle\right)+\Re\left(2i\int_{Q}sz\langle\nabla_g\varphi',\nabla_g \bar{z}\rangle\right)
\\
&=\Re\left(-2i\int_{Q}s \bar{z}'\langle\nabla_g\varphi,\nabla_g z\rangle\right)+\Re\left(-2i\int_{Q}s \bar{z}' \Delta_g\varphi z\right)
\\
&\hskip 2cm +\Re\left(2i\int_\Upsilon s z\partial_{\nu_g}\varphi\bar{z}'\right)+\Re\left(2i\int_{Q}sz\langle\nabla_g\varphi',\nabla_g \bar{z}\rangle\right)
\\
&=-\Re\left(\overline{-2i\int_{Q}s z'\langle\nabla_g\varphi,\nabla_g \bar{z}\rangle}\right)-2\Re\left(\overline{-i\int_{Q}s z' \Delta_g\varphi \bar{z}}\right)
\\
&\hskip 2cm +\Re\left(2i\int_\Upsilon s z\partial_{\nu_g}\varphi\bar{z}'\right)+\Re\left(2i\int_{Q}sz\langle\nabla_g\varphi',\nabla_g \bar{z}\rangle\right)
\\
&=-I_1-2I_2+\Re\left(2i\int_\Upsilon s z\partial_{\nu_g}\varphi\bar{z}'\right)+\Re\left(2i\int_{Q}sz\langle\nabla_g\varphi',\nabla_g \bar{z}\rangle\right),
\end{align*}
which implies
\[
I_1+I_2=\Re\left(i\int_\Upsilon s z\partial_{\nu_g}\varphi\bar{z}'\right)+\Re\left(i\int_{Q}sz\langle\nabla_g\varphi',\nabla_g \bar{z}\rangle\right).
\]
Moreover, we have
\[
I_3=-(1/2)\int_Q s\varphi'\partial_t(|z|^2)=(1/2)\int_Q s\varphi''|z|^2,
\]
\begin{align*}
I_4&=\Re\left(2\int_{Q} s\nabla_g^2\varphi(\nabla_g z,\nabla_g \bar{z})\right)+\int_Q s\langle\nabla_g\varphi,\nabla_g|\nabla_g z|_g^2\rangle
\\
&\hskip 6cm-\Re\left(2\int_\Upsilon s\partial_{\nu_g}z\langle\nabla_g\varphi,\nabla_g\bar{z}\rangle\right)
\\
&=\Re\left(2\int_{Q} s\nabla_g^2\varphi(\nabla_g z,\nabla_g \bar{z})\right)-\int_Q s\Delta_g\varphi|\nabla_g z|_g^2
\\
&\hskip 3cm -\Re\left(2\int_\Upsilon s\partial_{\nu_g}z\langle\nabla_g\varphi,\nabla_g\bar{z}\rangle\right)+\int_\Upsilon s\partial_{\nu_g}\varphi|\nabla_g z|_g^2,
\end{align*}
\begin{align*}
I_5&=\int_{Q} s\Delta_g\varphi|\nabla_g z|_g^2+(1/2)\int_Q s\langle\nabla_g\Delta_g\varphi,\nabla_g (|z|^2)\rangle-\Re\left(\int_\Upsilon s\partial_{\nu_g}z\Delta_g\varphi\bar{z}\right)
\\
&=\int_{Q} s\Delta_g\varphi|\nabla_g z|_g^2-(1/2)\int_Q s\Delta_g^2\varphi|z|^2
\\
&\hskip 4cm -\Re\left(\int_\Upsilon s\partial_{\nu_g}z\Delta_g\varphi\bar{z}\right)+(1/2)\int_\Upsilon s\partial_{\nu_g}\Delta_g\varphi|z|^2
\end{align*}
and
\begin{align*}
I_6&=\Re\left(-i\int_Q s\varphi'|\nabla_g z|_g^2\right)+\Re\left(-i\int_Q s\langle\nabla_g\varphi',\nabla_g z\rangle \bar{z}\right)+\Re\left(i\int_\Upsilon s\partial_{\nu_g}z\varphi'\bar{z}\right)
\\
&=\Re\left(-i\int_Q s\langle\nabla_g\varphi',\nabla_g z\rangle \bar{z}\right)+\Re\left(i\int_\Upsilon s\partial_{\nu_g}z\varphi'\bar{z}\right).
\end{align*}
Finally, we get
\begin{align*}
I_7&=-\int_{Q} s^3|\nabla_g\varphi|_g^2 \langle\nabla_g\varphi,\nabla_g |z|^2\rangle
\\
&=\int_{Q} s^3\divergence(|\nabla_g\varphi|_g^2 \nabla_g\varphi)|z|^2-\int_\Upsilon s^3|\nabla_g\varphi|_g^2 \partial_{\nu_g}\varphi |z|^2
\\
&=\int_{Q} s^3\left(2\nabla_g^2\varphi(\nabla_g\varphi,\nabla_g\varphi)+|\nabla_g\varphi|_g^2\Delta_g\varphi\right)|z|^2-\int_\Upsilon s^3|\nabla_g\varphi|_g^2 \partial_{\nu_g}\varphi |z|^2,
\end{align*}
which implies
\[
I_7+I_8=\int_{Q} 2s^3\nabla_g^2\varphi(\nabla_g\varphi,\nabla_g\varphi)|z|^2-\int_\Upsilon s^3|\nabla_g\varphi|_g^2 \partial_{\nu_g}\varphi |z|^2.
\]
Adding all of these together gives
\begin{align*}
&\Re (P_s^+ z,\overline{P_s^- z})_g+\mathcal{B}-\int_{Q}s \Re\left(iz\langle\nabla_g\varphi',\nabla_g \bar{z}\rangle\right)+\int_Q s\Re\left(i\langle\nabla_g\varphi',\nabla_g z\rangle \bar{z}\right)
\\
&\hskip 1cm =\int_{Q}2s\Re\left(\nabla_g^2\varphi(\nabla_g z,\nabla_g \bar{z})\right)
\\
&\hskip 3cm +\int_{Q}\left[2s^3\nabla_g^2\varphi(\nabla_g\varphi,\nabla_g\varphi)+(s/2)\varphi''-(s/2)\Delta_g^2\varphi\right]|z|^2,
\end{align*}
where
\begin{align*}
&\mathcal{B}:=-\int_\Upsilon s \Re\left(iz\partial_{\nu_g}\varphi\bar{z}'\right)+\int_\Upsilon 2s\Re\left(\partial_{\nu_g}z\langle\nabla_g\varphi,\nabla_g\bar{z}\rangle\right)-s\int_\Upsilon\partial_{\nu_g}\varphi|\nabla_g z|_g^2
\\
&\hskip 2cm +\int_\Upsilon s\Re\left(\partial_{\nu_g}z\Delta_g\varphi\bar{z}\right)-(1/2)\int_\Upsilon s\partial_{\nu_g}\Delta_g\varphi|z|^2
\\
&\hskip 4cm -\int_\Upsilon s\Re\left(i\partial_{\nu_g}z\varphi'\bar{z}\right)+\int_\Upsilon s^3|\nabla_g\varphi|_g^2 \partial_{\nu_g}\varphi |z|^2.
\end{align*}

Henceforth, $C=C(g,\phi,T)>0$, $\gamma_\ast=\gamma_\ast(g,\phi,T)$ and $s_\ast=s_\ast(g,\phi,T)>0$ denote generic constants, and set $\delta:=\inf_{\{\phi> 0\}}|\nabla_g\phi |_g>0$. Then we have
\begin{align*}
2s\Re\left(\nabla_g^2\varphi(\nabla_g z,\nabla_g \bar{z})\right)&=2\sigma\left(\nabla_g^2\phi(\nabla_g z,\nabla_g \bar{z})+\gamma|\langle\nabla_g\phi,\nabla_g z\rangle|^2\right)
\\
&\ge 2\sigma \nabla_g^2\phi(\nabla_g z,\nabla_g \bar{z})
\\
&= 2\sigma \left(\nabla_g^2\phi(\nabla_g (\Re z),\nabla_g (\Re z))+\nabla_g^2\phi(\nabla_g (\Im z),\Im\nabla_g (\Im z))\right)
\\
&\ge C\sigma|\nabla_g z|_g^2,
\end{align*}
and, by $|\varphi''|\le C\ell^3 e^{2\gamma(\phi+2m)}\le C\xi^3$ and $|\Delta_g^2\varphi|\le C\gamma^4\xi$,
\begin{align*}
&2s^3\nabla_g^2\varphi(\nabla_g\varphi,\nabla_g\varphi)+(s/2)\varphi''-(s/2)\Delta_g^2\varphi
\\
&\hskip 1cm \ge 2\sigma^3 \left(\nabla_g^2\phi(\nabla_g\phi,\nabla_g\phi)+\gamma|\nabla_g\phi|_g^4\right)-Cs\xi^3-Cs\gamma^4\xi
\\
&\hskip 1cm \ge 2\gamma\sigma^3\delta^4-C(\sigma^3+s\xi^3+s\gamma^4\xi^3)
\\
&\hskip 1cm \ge C\gamma\sigma^3,\quad \gamma\ge \gamma_\ast,\; s\ge s_\ast.
\end{align*}
In consequence, we get
\[
C\int_{Q} (\sigma|\nabla_g z|_g^2+\gamma\sigma^3|z|^2)\le \Re(P_s^+z,\overline{P_s^-z})_g+\int_{Q}2s |z||\nabla_g\varphi'|_g|\nabla_g z|_g+\mathcal{B}.
\]

Set
\begin{align*}
&\mathcal{B}_{\Sigma_0}:=\int_{\Sigma_0} \left[-s\Re\left(iz\partial_{\nu_g}\varphi\bar{z}'\right)+2s\Re\left(\partial_{\nu_g}z\langle\nabla_g\varphi,\nabla_g\bar{z}\rangle\right)-s\partial_{\nu_g}\varphi|\nabla_g z|_g^2\right.
\\
&\hskip .5cm \left.+s\Re\left(\partial_{\nu_g}z\Delta_g\varphi\bar{z}\right)-s\Re\left(i\partial_{\nu_g}z\varphi'\bar{z}\right)-(s/2)\partial_{\nu_g}\Delta_g\varphi|z|^2+s^3|\nabla_g\varphi|_g^2 \partial_{\nu_g}\varphi |z|^2\right]
\end{align*}
and
\begin{align*}
&\mathcal{B}_\Sigma:=\int_{\Sigma} \left[-s\Re\left(i z\partial_{\nu_g}\varphi\bar{z}'\right)+2s\Re\left(\partial_{\nu_g}z\langle\nabla_g\varphi,\nabla_g\bar{z}\rangle\right)-s\partial_{\nu_g}\varphi|\nabla_g z|_g^2\right.
\\
&\hskip .5cm \left.+s\Re\left(\partial_{\nu_g}z\Delta_g\varphi\bar{z}\right)-s\Re\left(i\partial_{\nu_g}z\varphi'\bar{z}\right)-(s/2)\partial_{\nu_g}\Delta_g\varphi|z|^2+s^3|\nabla_g\varphi|_g^2 \partial_{\nu_g}\varphi |z|^2\right]
\end{align*}
so that
\[
\mathcal{B}=\mathcal{B}_{\Sigma_0}+\mathcal{B}_\Sigma.
\]
By using $|\Delta_g\varphi|\le C\gamma^2\xi$, $|\varphi'|\le C\xi^2$ and $|\partial_{\nu_g}\Delta_g\varphi|\le C\gamma^3\xi$, we obtain
\[
C\mathcal{B}_{\Sigma_0}\le \int_{\Sigma_0}\left[\sigma^{-1}|z'|^2+\sigma|\nabla_g z|_g^2+\sigma^3|z|^2\right].
\]

On the other hand,  we have
\[
|\nabla_g z|_g^2=|\nabla_{\tau_g}z|_g^2+|\partial_{\nu_g} z|^2,\quad \langle\nabla_g\varphi,\nabla_g \bar{z}\rangle=-\gamma\xi|\nabla_g\phi|_g\partial_{\nu_g} \bar{z}\quad \mbox{on}\; \Sigma
\]
and $\nu_g=-\frac{\nabla_g\phi}{|\nabla_g\phi|_g}$ on $\Sigma$. Whence
\begin{align*}
\mathcal{B}_\Sigma&=\int_{\Sigma} \left[\sigma|\nabla_g\phi|_g\Re\left(i z\bar{z}'\right)-2\sigma|\nabla_g\phi|_g|\partial_{\nu_g}z|^2+\sigma|\nabla_g\phi|_g|\nabla_g z|_g^2\right.
\\
&\hskip 1cm \left.+s\Delta_g\varphi\Re\left(\partial_{\nu_g}z\bar{z}\right)-s\varphi'\Re\left(i\partial_{\nu_g}z\bar{z}\right)-(s/2)\partial_{\nu_g}\Delta_g\varphi|z|^2-\sigma^3|\nabla_g\phi|_g^3 |z|^2\right]
\\
&=\int_{\Sigma} \left[\sigma|\nabla_g\phi|_g\Re\left(i z\bar{z}'\right)-\sigma|\nabla_g\phi|_g|\partial_{\nu_g}z|^2+\sigma|\nabla_g\phi|_g|\nabla_{\tau_g} z|_g^2\right.
\\
&\hskip 1cm \left.+s\Delta_g\varphi\Re\left(\partial_{\nu_g}z\bar{z}\right)-s\varphi'\Re\left(i\partial_{\nu_g}z\bar{z}\right)-(s/2)\partial_{\nu_g}\Delta_g\varphi|z|^2-\sigma^3|\nabla_g\phi|_g^3 |z|^2\right],
\end{align*}
which means
\begin{align*}
&\mathcal{B}_\Sigma+(\delta/2)\int_\Sigma \sigma|\partial_{\nu_g}z|^2
\\
&\hskip 1cm \le\int_\Sigma \left[-(\delta/2)\sigma|\partial_{\nu_g}z|^2+s\left(|\Delta_g\varphi|+|\varphi'|\right)|\partial_{\nu_g}z||z|+\sigma|\nabla_g\phi|_g |z||z'|\right.
\\
&\hskip 3cm \left.+\sigma|\nabla_g\phi|_g|\nabla_{\tau_g} z|_g^2-(s/2)\partial_{\nu_g}\Delta_g\varphi|z|^2-\sigma^3|\nabla_g\phi|_g^3 |z|^2\right].
\end{align*}
We note that
\begin{align*}
&-(\delta/2)\sigma|\partial_{\nu_g}z|^2+s\left(|\Delta_g\varphi|+|\varphi'|\right)|\partial_{\nu_g}z||z|
\\
&\hskip 2cm =-(\delta/2)\sigma\left(|\partial_{\nu_g}z|-s\frac{|\Delta_g\varphi|+|\varphi'|}{\delta\sigma}|z|\right)^2+s^2\frac{\left(|\Delta_g\varphi|+|\varphi'|\right)^2}{2\delta\sigma}|z|^2
\\
&\hskip 2cm \le Cs\gamma^3\xi^3|z|^2.\end{align*}
By $2\sigma|z||z'|\le (s\gamma^{-1}\xi^{-1})|z'|^2+(s\gamma^3\xi^3)|z|^2$, we obtain
\begin{align*}
&\mathcal{B}_\Sigma+(\delta/2)\int_\Sigma \sigma|\partial_{\nu_g}z|^2
\\
&\hskip 1cm \le\int_\Sigma \left[C(\sigma|\nabla_{\tau_g}z|_g^2+s\gamma^{-1}\xi^{-1}|z'|^2+s\gamma^3\xi^3|z|^2)-(\delta^3/2)\sigma^3|z|^2\right],\quad s\ge s_\ast
\end{align*}
and then
\[
\mathcal{B}_\Sigma+(\delta/4)\int_\Sigma \sigma\left[|\partial_{\nu_g}z|^2+\delta^2\sigma^2|z|^2\right]\le C\int_\Sigma (s\gamma^{-1}\xi^{-1}|z'|^2+\sigma|\nabla_{\tau_g}z|_g^2),\quad s\ge s_\ast.
\]

Combining the estimates above, we get 
\begin{align*}
&C\left(\int_{Q}(\sigma|\nabla_g z|_g^2+\gamma\sigma^3|z|^2)+\int_{\Sigma}\sigma(|\partial_{\nu_g}z|^2+\sigma^2|z|^2)\right)
\\
&\hskip 1cm \le  \Re(P_s^+z,\overline{P_s^-z})_g+\int_{Q}2s |z||\nabla_g\varphi'|_g|\nabla_g z|_g
\\
&\hskip 5cm+\mathcal{B}_{\Sigma_0}+C\int_\Sigma (s\gamma^{-1}\xi^{-1}|z'|^2+\sigma|\nabla_{\tau_g}z|_g^2)
\\
&\hskip 1cm \le (1/2)\|P_s z\|_g^2+C\int_{Q}\gamma\xi \left(|\nabla_g z|_g^2+s^2\xi^2|z|^2\right)
\\
&\hskip 3cm+C\int_{\Sigma_0}\left(\sigma^{-1}|z'|^2+\sigma|\nabla_g z|_g^2+\sigma^3|z|^2\right)
\\
&\hskip 5cm +C\int_\Sigma (s\gamma^{-1}\xi^{-1}|z'|^2+\sigma|\nabla_{\tau_g}z|_g^2),\quad s\ge s_\ast,
\end{align*}
where we used $|\nabla_g\varphi'|_g\le C\gamma\xi^2$. As the second term in the right-hand side  can absorbed by the left-hand side, we find
\begin{align*}
&C\left(\int_{Q}\sigma(|\nabla_g z|_g^2+\gamma\sigma^2|z|^2)+\int_{\Sigma}\sigma(|\partial_{\nu_g}z|^2+\sigma^2|z|^2)\right)
\\
&\hskip 2cm \le \|P_s z\|_g^2+\int_{\Sigma_0}\left(\sigma^{-1}|z'|^2+\sigma|\nabla_g z|_g^2+\sigma^3|z|^2\right)
\\
&\hskip 5cm +\int_\Sigma (s\gamma^{-1}\xi^{-1}|z'|^2+\sigma|\nabla_{\tau_g}z|_g^2),\quad s\ge s_\ast.
\end{align*}
Since $u=e^{-s\varphi}z$ and $\nabla_{\tau_g}u=\nabla_{\tau_g}z$ holds by $\nabla_{\tau_g}\phi=0$, we end up getting
\begin{align*}
&C\left(\int_{Q}e^{2s\varphi}\sigma(|\nabla_g u|_g^2+\gamma\sigma^2|u|^2)+\int_{\Sigma}e^{2s\varphi}\sigma(|\partial_{\nu_g}u|^2+\sigma^2|u|^2)\right)
\\
&\hskip 1cm \le \int_Q e^{2s\varphi}|(i\partial_t+\Delta_g)u|^2+\int_{\Sigma_0}e^{2s\varphi}\left(\sigma^{-1}|u'|^2+\sigma|\nabla_g u|_g^2+\sigma^3|u|^2\right)
\\
&\hskip 2cm +\int_\Sigma e^{2s\varphi}(s\gamma^{-1}\xi^{-1}|u'|^2+\sigma|\nabla_{\tau_g}u|_g^2+Cs^3\gamma^{-1}\xi^3|u|^2),\quad s\ge s_\ast.
\end{align*}
As the last zeroth-order term in the right-hand side can absorbed by the left-hand side. The proof is then complete.

\end{proof}

\appendix

\section{Analysis of the IBVP}\label{appA}

Let $A: D(A)\subset L^2(U)\rightarrow L^2(U)$ be the unbounded operator given by
\[
Au:=i(1/\sqrt{|g|})(\partial_k+i a_k)\left(\sqrt{|g|}g^{k\ell}(\partial_\ell +ia_\ell)u\right),\quad  D(A):=H_0^1(U)\cap H^2(U),
\]
where $(a_1,\ldots,a_n)\in W^{1,\infty}(\mathbb{R}^n;\mathbb{R}^n)$.
 
 We verify that $A$ (densely defined) is skew-adjoint. Therefore, it follows from \cite[Proposition 3.7.2]{Tucsnak2009} that $A$ is $m$-dissipative. Then, according to Lumer-Phillips theorem, $A$ generates a contraction semigroup (e.g. \cite[Theorem 3.8.4]{Tucsnak2009}).

Let 
\[
\mathcal{X}_0=C^1([0,T];L^2(U))\cap C([0,T];H_0^1(U)\cap H^2(U)).
\]

As $A$ is the generator of a contraction semigroup $e^{tA}$, for all $v_0\in D(A)$ and $F\in W^{1,1}((0,T);L^2(U))$ the IBVP
\[
\begin{cases}
(\partial_t -iL)v=F\quad \mbox{in}\; U\times (0,T),
\\
v_{|\Sigma}=0,
\\
v(\cdot,0)=v_0
\end{cases}
\]
has a unique solution $v=v(v_0,F)\in \mathcal{X}_0$.

Let us assume now that $(a_1,\ldots,a_n)\in W^{3,\infty}(\mathbb{R}^n;\mathbb{R}^n)$, $A v_0\in D(A)$, $F\in W^{2,1}((0,T),L^2(U))$ and $F(\cdot,0)\in D(A)$. By Duhamel's formula
\[
v(\cdot,t)=e^{tA}v_0+\int_0^te^{sA}F(\cdot, t-s)ds,\quad t\ge 0.
\]
Therefore, we have
\[
\partial_tv(\cdot,t)=e^{tA}(Av_0+F(\cdot,0))+\int_0^te^{sA}\partial_t F(\cdot, t-s)ds,\quad t\ge 0.
\]
In particular, $\partial_tv$ is the solution of the IBVP
\[
\begin{cases}
(\partial_t -iL)v=\partial_tF\quad \mbox{in}\; U\times (0,T),
\\
v_{|\Sigma}=0,
\\
v(\cdot,0)=Av_0+F(\cdot,0)
\end{cases}
\]
and hence $\partial_t v\in \mathcal{X}_0$. Thus, $v\in \mathcal{X}$.

Next, consider the IBVP
\begin{equation}\label{exIBVP_A}
\begin{cases}
(\partial_t-iL)u=0\quad \text{in}\; U\times(0,T),
\\
u_{|\Sigma}=f,
\\
u(\cdot,0)=u_0.
\end{cases}
\end{equation}

Let $(f,u_0)\in \mathcal{T}$ and $H\in \mathcal{H}_0$ be chosen arbitrarily such that $f=H_{|\Sigma}$ and $u_0=H(\cdot,0)$. Since $F:=-(\partial_t -iL)H\in W^{2,1}((0,T); L^2(U))$ and $F(\cdot, 0)\in D(A )$, the last case, with $v_0=0$, shows that $u=H+v(0,F)\in \mathcal{X}$ is a solution of the IBVP \eqref{exIBVP_A}. The previous analysis also shows that $u=0$ is the unique solution of the IBVP \eqref{exIBVP_A} when $f=0$ and $u_0=0$.

\section*{Acknowledgement}

This work was supported by JSPS KAKENHI Grant Number JP23KK0049.

\section*{Declarations}
\subsection*{Conflict of interest}
The authors declare that they have no conflict of interest.

\subsection*{Data availability}
Data sharing not applicable to this article as no datasets were generated or analyzed during the current study.

\bibliographystyle{plain}
\bibliography{Schrodinger_continuation_2}

\begin{thebibliography}{1}

\bibitem{Baudouin2002}
L.~Baudouin and J.-P. Puel.
\newblock Uniqueness and stability in an inverse problem for the schrödinger
  equation.
\newblock {\em Inverse Problems}, 18:1537--1554, 2002.

\bibitem{Choulli}
M.~Choulli.
\newblock {\em An Introduction to the Uniqueness of Continuation of Second
  Order Partial Differential Equations}.
\newblock to appear.

\bibitem{CT2024a}
M.~Choulli and H.~Takase.
\newblock An inverse hyperbolic obstacle problem.
\newblock {\em arXiv:2407.05662}, 2024.

\bibitem{CT2024}
M.~Choulli and H.~Takase.
\newblock Lipschitz stability for an elliptic inverse problem with two
  measurements.
\newblock {\em arXiv:2404.13901}, 2024.

\bibitem{Choulli2021}
M.~Choulli and M.~Yamamoto.
\newblock Global stability result for parabolic cauchy problems.
\newblock {\em Journal of Inverse and Ill-Posed Problems}, 29:895--915, 12
  2021.

\bibitem{Huang2019}
X.~Huang, Y.~Kian, E.~Soccorsi, and M.~Yamamoto.
\newblock Carleman estimate for the schrödinger equation and application to
  magnetic inverse problems.
\newblock {\em Journal of Mathematical Analysis and Applications},
  474:116--142, 2019.

\bibitem{Mercado2023}
A.~Mercado and R.~Morales.
\newblock Exact controllability for a schrödinger equation with dynamic
  boundary conditions.
\newblock {\em SIAM Journal on Control and Optimization}, 61:3501--3525, 2023.

\bibitem{Mercado2008}
A.~Mercado, A.~Osses, and L.~Rosier.
\newblock Inverse problems for the schrödinger equation via carleman
  inequalities with degenerate weights.
\newblock {\em Inverse Problems}, 24(1), 015017:18pp, 2008.

\bibitem{Tucsnak2009}
M.~Tucsnak and G.~Weiss.
\newblock {\em Observation and Control for Operator Semigroups}.
\newblock Birkhäuser Verlag, 2009.

\end{thebibliography}

\end{document}